\providecommand{\U}[1]{\protect\rule{.1in}{.1in}}
\let\oldmathbf\mathbf
\renewcommand{\mathbf}[1]{\boldsymbol{\oldmathbf{#1}}}
\newtheorem{theorem}{Theorem}
\newtheorem{conjecture}[theorem]{Conjecture}
\begin{document}

\title{Convergence of multiple Fourier series and Pick's theorem}
\author[L. Brandolini]{Luca Brandolini}
\address{Dipartimento di Ingegneria Gestionale, dell'Informazione e della Produzione,
Universit\`a degli Studi di Bergamo, Viale Marconi 5, Dalmine BG, Italy}
\email{luca.brandolini@unibg.it}
\author[L. Colzani]{Leonardo Colzani}
\address{Dipartimento di Matematica e Applicazioni, Universit\`a di Milano-Bicocca, Via
Cozzi 55, Milano, Italy}
\email{leonardo.colzani@unimib.it}
\author[S. Robins]{Sinai Robins}
\address{Departamento de ciência da computação,
Instituto de Matemática e Estatistica,
Universidade de São Paulo,
Brasil}
\email{sinai.robins@gmail.com}
\author[G. Travaglini]{Giancarlo Travaglini}
\address{Dipartimento di Matematica e Applicazioni, Universit\`a di Milano-Bicocca, Via
Cozzi 55, Milano, Italy}
\email{giancarlo.travaglini@unimib.it}
\subjclass{11H06, 42B05}
\keywords{Discrepancy, Integer points, Fourier analysis}

\date{}
\maketitle

\begin{abstract}
We add another brick to the large building comprising proofs of Pick's
theorem. Although our proof is not the most elementary, it is short and
reveals a connection between Pick's theorem and the pointwise convergence of
multiple Fourier series of piecewise smooth functions.

\end{abstract}

\bigskip
\bigskip
Let $P$ be an integer polygon, that is a simple polygon having integer
vertices in the Cartesian plane. Let $\left\vert P\right\vert $ be its area,
$I$ the number of integer points strictly inside $P$, and $B$ the number of
integer points on the boundary $\partial P$. Then

\begin{theorem}
[Pick]%
\begin{equation}
\left\vert P\right\vert =I+\frac{1}{2}B-1\ . \label{stat}%
\end{equation}

\end{theorem}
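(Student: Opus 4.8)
The plan is to apply a pointwise convergence theorem for multiple Fourier series to the indicator function $\chi_{P}$ and then sum the resulting identity over one period of $\mathbb{Z}^{2}$; evaluating that sum in two ways will yield $|P|$ on one side and $I+\tfrac12 B-1$ on the other. Concretely, translate $P$ by an integer vector (which changes none of $|P|,I,B$) so that it lies strictly inside an open box $(0,N)^{2}$ with $N\in\mathbb{N}$ large, and regard $\chi_{P}$ as a piecewise smooth (in fact piecewise constant) function on the torus $\mathbb{T}=\mathbb{R}^{2}/N\mathbb{Z}^{2}$, with Fourier coefficients $c_{k}=N^{-2}\int_{P}e^{-2\pi i k\cdot x/N}\,dx$, $k\in\mathbb{Z}^{2}$; thus $c_{0}=N^{-2}|P|$ and, more generally, $c_{Nj}=N^{-2}\widehat{\chi_{P}}(j)$, where $\widehat{\chi_{P}}$ is the Fourier transform on $\mathbb{R}^{2}$. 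Applied to $\chi_{P}$ with the symmetric partial sums $S_{R}\chi_{P}(x)=\sum_{|k|\le R}c_{k}e^{2\pi i k\cdot x/N}$, the convergence theorem says that $S_{R}\chi_{P}(x_{0})$ tends, as $R\to\infty$, to the limiting mean value of $\chi_{P}$ on small circles about $x_{0}$: this is $1$ if $x_{0}$ lies in the interior of $P$, $0$ if $x_{0}$ is outside $P$, $\tfrac12$ if $x_{0}$ is on a relatively open edge, and $\theta_{x_{0}}/2\pi$ if $x_{0}$ is a vertex with interior angle $\theta_{x_{0}}$.

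Now evaluate at the $N^{2}$ lattice points $m$ forming a fundamental domain $\{0,1,\dots,N-1\}^{2}$ and add. This $m$-sum is finite, hence commutes with $\lim_{R}$ and with $\sum_{|k|\le R}$, and $\sum_{m}e^{2\pi i k\cdot m/N}$ equals $N^{2}$ if $k\in N\mathbb{Z}^{2}$ and $0$ otherwise, so
\[
\sum_{m}\lim_{R\to\infty}S_{R}\chi_{P}(m)=\lim_{R\to\infty}N^{2}\!\!\sum_{j\in\mathbb{Z}^{2},\ |Nj|\le R}\!\! c_{Nj}.
\]
The key lemma — and the only place the integrality of the vertices is used — is that $\widehat{\chi_{P}}(j)$ is purely imaginary for every $j\in\mathbb{Z}^{2}\setminus\{0\}$. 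Indeed, $\operatorname{Re}\widehat{\chi_{P}}(j)=\int_{P}\cos(2\pi j\cdot x)\,dx=\int_{\mathbb{R}}\cos(2\pi s)\,w(s)\,ds$, where $w(s)$ is $|j|^{-1}$ times the length of the slice $P\cap\{x:j\cdot x=s\}$; since $j$ and every vertex of $P$ are integral, all breakpoints of $w$ (including any jumps, which occur only when an edge of $P$ is orthogonal to $j$) sit at integers, so $w$ agrees on each interval $(n,n+1)$ with an affine function $\alpha+\beta(s-n)$, and $\int_{n}^{n+1}\cos(2\pi s)\,\bigl(\alpha+\beta(s-n)\bigr)\,ds=0$. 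Therefore $\widehat{\chi_{P}}(-j)=\overline{\widehat{\chi_{P}}(j)}=-\widehat{\chi_{P}}(j)$, so $c_{Nj}+c_{-Nj}=0$ for $j\ne0$; the symmetric partial sums above collapse to $N^{2}c_{0}=|P|$, giving $\sum_{m}\lim_{R}S_{R}\chi_{P}(m)=|P|$.

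On the other hand, summing the pointwise limits directly — with $V$ the number of vertices of $P$ — and invoking the angle-sum formula $\sum_{v}\theta_{v}=(V-2)\pi$ for a simple polygon,
\[
\sum_{m}\lim_{R\to\infty}S_{R}\chi_{P}(m)=I+\tfrac12(B-V)+\tfrac1{2\pi}\sum_{v}\theta_{v}=I+\tfrac12(B-V)+\tfrac12(V-2)=I+\tfrac12 B-1 ,
\]
and comparing with the previous paragraph yields \eqref{stat}. The step I expect to be the main obstacle is the convergence theorem itself: one must have pointwise convergence of multiple Fourier series of piecewise smooth functions with the correct corner value $\theta_{x_{0}}/2\pi$, and the partial sums used must be symmetric under $k\mapsto-k$ so that the purely‑imaginary property of $\widehat{\chi_{P}}$ produces the cancellation. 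The slice‑length lemma and the angle‑sum formula are, by comparison, routine.
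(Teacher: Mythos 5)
Your proposal is correct in its arithmetic core and follows the same strategy as the paper: count lattice points with the vertex weight $\alpha/2\pi$, use the angle--sum formula $\sum_v\theta_v=(V-2)\pi$ to identify this count with $I+\frac12B-1$, and then show that all nonzero frequencies drop out because of the integrality of the vertices. Your key lemma --- that $\widehat{\chi}_P(j)$ is purely imaginary for $0\neq j\in\mathbb{Z}^2$, proved by slicing $P$ orthogonally to $j$ and integrating $\cos(2\pi s)$ against a function that is affine on each interval $(n,n+1)$ --- is a clean repackaging of the paper's divergence--theorem computation (there one sees $\widehat{\chi}_P(m)=\frac{-1}{2\pi i}\times(\text{real})$, hence odd under $m\mapsto-m$, and the even radial factor $\widehat{\varphi}(\varepsilon m)$ kills the pairing $m\leftrightarrow-m$); your version is arguably more elegant and equally rigorous.

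The genuinely different choice --- and the one place where your proof is not self--contained --- is the analytic engine. You use true circular partial sums $S_R$ and must therefore invoke pointwise convergence of the spherical partial sums of the double Fourier series of $\chi_P$ at \emph{every} point, with the value $\theta_{x_0}/2\pi$ at vertices, $1/2$ on edges, and $0$ outside (a localization statement). This is a real theorem in dimension $2$, but it is deep: since $\widehat{\chi}_P(\xi)$ decays only like $|\xi|^{-1}$ along directions normal to the edges, $\sum_k|c_k|=+\infty$, absolute convergence fails, localization for unrestricted partial sums is delicate, and the transference between partial integrals of $\widehat{\chi}_P$ and partial sums of the periodized series is an additional nontrivial step. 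You correctly identify this as the main obstacle; be aware that it is essentially the entire difficulty of the problem, not a routine citation. The paper avoids it by replacing the sharp cutoff $\chi_{\{|k|\le R\}}$ with the summability factor $\widehat{\varphi}(\varepsilon m)$ for a compactly supported radial $\varphi$ with $\int\varphi=1$: then the needed pointwise statement degenerates to the exact identity $\varphi_\varepsilon\ast\widetilde{\chi}_P(n)=\widetilde{\chi}_P(n)$ for small $\varepsilon$ (the average of $\chi_P$ over a small disk centered at a lattice point \emph{is} the normalized angle), and the whole Poisson--summation apparatus is proved in a page via a Plancherel--Polya inequality. In short: your route buys a more classical-looking statement about genuine partial sums at the cost of a hard convergence theorem; the paper's route buys a short, self--contained proof at the cost of replacing partial sums by convolution means. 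If you substitute any radial, compactly supported (or rapidly decreasing) summability kernel for your sharp cutoff, your argument closes completely and coincides with the paper's.
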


In spite of the simple statement, this is not a very old result. It was
published by Georg Pick in 1899, and first popularized by Hugo Steinhaus in
1937 in the Polish edition of \textit{Mathematical Snapshots}.

The theorem has many proofs and interesting features. Its statement can be
explained to primary school children, who could be asked to verify it on examples.

A relatively simple and well-known proof can be sketched as follows.

Step 1. A simple integer polygon can be triangulated into integer primitive
triangles, that is with no integer points other than the vertices.

Step 2. Both terms $\left\vert P\right\vert $ and $I+\frac{1}{2}B-1$ in
(\ref{stat}) are \textquotedblleft additive\textquotedblright\ with respect to
the above triangulation.

Step 3. A primitive triangle together with one of its reflections gives a
parallelogram which tiles the plane under integer translations.

Step 4. This latter parallelogram has area $1$, so that (\ref{stat}) holds
true for primitive triangles.

Pick's theorem can be related to certain non completely elementary topics in
mathematics. See e.g. \cite{Fun} for a connection to Euler's formula for
planar graphs, or \cite{RT} for a connection to Minkowski's theorem on integer
points in convex bodies, or \cite{DR} for a complex analytic proof.

The purpose of this paper is to exhibit a direct connection between Pick's
theorem and harmonic analysis. The Fourier analytic proof we give here is a
consequence of classical results on pointwise convergence of multiple Fourier
series. This proof is short, self contained and does not rely on any of the
previous geometrical steps. Moreover it suggests a point of departure for
higher dimensional investigations.

In what follows if $f\left(  x\right)  $ and $\varphi\left(  x\right)  $ are
integrable functions on $\mathbb{R}^{d}$ and if we define $\varphi
_{\varepsilon}\left(  x\right)  =\varepsilon^{-d}\varphi\left(  \varepsilon
^{-1}x\right)  $, then
\[
\varphi_{\varepsilon}\ast f\left(  x\right)  =\int_{\mathbb{R}^{d}}%
\varphi_{\varepsilon}\left(  x-y\right)  f\left(  y\right)  dy
\]
denotes the convolution. Moreover
\[
\widehat{f}\left(  \xi\right)  =\int_{\mathbb{R}^{d}}f\left(  x\right)
e^{-2\pi i\xi\cdot x}\ dx
\]
denotes the Fourier transform. It is well known that if $f\left(  x\right)  $
is integrable on $\mathbb{R}^{d}$, then $\sum_{n\in\mathbb{Z}^{d}}f\left(
n+x\right)  $ is a periodic function integrable on the torus $\mathbb{R}%
^{d}/\mathbb{Z}^{d}$ and its Fourier coefficients are the restriction of
$\widehat{f}\left(  \xi\right)  $ to the integer points $\mathbb{Z}^{d}$. That
is, formally we have the Poisson summation formula%
\[
\sum_{n\in\mathbb{Z}^{d}}f\left(  n+x\right)  =\sum_{m\in\mathbb{Z}^{d}%
}\widehat{f}\left(  m\right)  e^{2\pi im\cdot x}.
\]
Without additional assumptions on the function $f\left(  x\right)  $, the
series in both sides of this identity do not necessarily converge pointwise.
On the other hand, a positive convergence result can be obtained assuming a
natural regularity condition on the function $f\left(  x\right)  $ and using a
suitable summability method for the Fourier series. The following variation of
the classical Poisson summation formula is especially tailored for our problem.

\begin{theorem}
\label{Thm Poisson}Let $\varphi\left(  x\right)  $ and $f\left(  x\right)  $
be square integrable functions on $\mathbb{R}^{d}$ with compact support.
Assume that $\int_{\mathbb{R}^{d}}\varphi\left(  x\right)  dx=1$ and that, for
every $x$,%
\begin{equation}
f\left(  x\right)  =\lim_{\varepsilon\rightarrow0^{+}}\left\{  \varphi
_{\varepsilon}\ast f\left(  x\right)  \right\}  . \label{Assumption}%
\end{equation}
Then, for every $\varepsilon>0$,
\[
\sum_{m\in\mathbb{Z}^{d}}\left\vert \widehat{\varphi}\left(  \varepsilon
m\right)  \widehat{f}\left(  m\right)  \right\vert <+\infty
\]
and, for every $x$,%
\begin{equation}
\sum_{n\in\mathbb{Z}^{d}}f\left(  n+x\right)  =\lim_{\varepsilon
\rightarrow0^{+}}\left\{  \sum_{m\in\mathbb{Z}^{d}}\widehat{\varphi}\left(
\varepsilon m\right)  \widehat{f}\left(  m\right)  e^{2\pi im\cdot x}\right\}
. \label{Poisson}%
\end{equation}

\end{theorem}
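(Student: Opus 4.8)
The plan is to reduce the statement to a few classical facts about Fourier series on the torus $\mathbb{R}^{d}/\mathbb{Z}^{d}$, taking care not to invoke the Poisson summation formula itself and keeping track of where genuinely pointwise (rather than almost everywhere) information is needed. First I would record the preliminaries: since $\varphi$ and $f$ are square integrable with compact support they also lie in $L^{1}(\mathbb{R}^{d})$ by Cauchy--Schwarz, so $\widehat{\varphi}$ and $\widehat{f}$ are bounded and continuous, the convolution identity $\widehat{\varphi_{\varepsilon}\ast f}=\widehat{\varphi_{\varepsilon}}\,\widehat{f}$ holds, and $\widehat{\varphi_{\varepsilon}}(\xi)=\widehat{\varphi}(\varepsilon\xi)$. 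Moreover $h_{\varepsilon}:=\varphi_{\varepsilon}\ast f$ is continuous and bounded, being a convolution of two $L^{2}$ functions; and if $\overline{B(0,R)}\supseteq\mathrm{supp}\,\varphi$, then for every $0<\varepsilon\leq1$ both $f$ and $h_{\varepsilon}$ are supported in the fixed compact set $K:=\mathrm{supp}\,f+\overline{B(0,R)}$.

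For the absolute summability I would use the following observation, valid for every $\varepsilon>0$: if $\psi\in L^{2}(\mathbb{R}^{d})$ has compact support, its periodization $\sum_{n\in\mathbb{Z}^{d}}\psi(n+x)$ reduces, on a fundamental domain, to a finite sum of $L^{2}$ functions, hence belongs to $L^{2}(\mathbb{R}^{d}/\mathbb{Z}^{d})$, and a term-by-term integration shows that its Fourier coefficients are exactly the values $\widehat{\psi}(m)$, $m\in\mathbb{Z}^{d}$; Parseval on the torus then gives $\sum_{m\in\mathbb{Z}^{d}}|\widehat{\psi}(m)|^{2}<+\infty$. Applying this to $\psi=f$ and to $\psi=\varphi_{\varepsilon}$, and then using Cauchy--Schwarz,
\[
\sum_{m\in\mathbb{Z}^{d}}\left\vert \widehat{\varphi}(\varepsilon m)\widehat{f}(m)\right\vert \leq\Bigl(\sum_{m\in\mathbb{Z}^{d}}|\widehat{\varphi_{\varepsilon}}(m)|^{2}\Bigr)^{1/2}\Bigl(\sum_{m\in\mathbb{Z}^{d}}|\widehat{f}(m)|^{2}\Bigr)^{1/2}<+\infty ,
\]
which is the first assertion.

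To obtain \eqref{Poisson} I would first prove, for each fixed $\varepsilon>0$, the identity
\[
\sum_{m\in\mathbb{Z}^{d}}\widehat{\varphi}(\varepsilon m)\widehat{f}(m)\,e^{2\pi im\cdot x}=\sum_{n\in\mathbb{Z}^{d}}(\varphi_{\varepsilon}\ast f)(n+x)\qquad\text{for every }x .
\]
Indeed, by the previous step the left-hand series converges absolutely and uniformly to a continuous periodic function, whose Fourier coefficients are $\widehat{\varphi_{\varepsilon}}(m)\widehat{f}(m)=\widehat{h_{\varepsilon}}(m)$; the periodization $\sum_{n}h_{\varepsilon}(n+x)$ is also continuous and has the very same Fourier coefficients, so by uniqueness the two continuous functions coincide at every point. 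Then I fix $x$ and let $\varepsilon\rightarrow0^{+}$ with $0<\varepsilon\leq1$: since $f$ and all the $h_{\varepsilon}$ are supported in the fixed compact $K$, the sum over $n$ in the displayed identity runs effectively over the fixed finite set $N(x):=\mathbb{Z}^{d}\cap(K-x)$, so it is a finite sum, and assumption \eqref{Assumption} lets me pass the limit inside term by term, giving $\sum_{n}(\varphi_{\varepsilon}\ast f)(n+x)\rightarrow\sum_{n}f(n+x)$. Combining this with the displayed identity proves \eqref{Poisson}.

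I do not expect a genuinely hard step: the theorem is essentially a careful repackaging of classical results. The two points deserving attention are (i) establishing the absolute summability of $\widehat{\varphi}(\varepsilon m)\widehat{f}(m)$ without circularly invoking the Poisson formula one is trying to prove — this is exactly what the Parseval argument for periodizations of $L^{2}$ functions handles — and (ii) upgrading the almost-everywhere equalities furnished by uniqueness of Fourier coefficients to equalities holding at \emph{every} point, which is why one insists on working with the continuous convolution $\varphi_{\varepsilon}\ast f$ and records a support bound uniform in $\varepsilon\in(0,1]$, so that the final limit is computed inside a finite sum.
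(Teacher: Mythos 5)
Your argument is correct, and its overall skeleton matches the paper's: periodize $\varphi_{\varepsilon}\ast f$, identify its Fourier coefficients as $\widehat{\varphi}(\varepsilon m)\widehat{f}(m)$, establish absolute convergence of that Fourier series so that the continuous periodization equals its Fourier series at every point, and finally pass to the limit $\varepsilon\rightarrow 0^{+}$ inside what is, for fixed $x$, a finite sum, using assumption \eqref{Assumption}. The one place where you take a genuinely different route is the absolute summability of $\widehat{\varphi}(\varepsilon m)\widehat{f}(m)$. The paper proves a Plancherel--Polya type inequality: for an integrable $g$ with compact support, choosing a smooth cutoff $\psi$ equal to $1$ on $\operatorname{supp}g$ gives $\widehat{g}=\widehat{\psi}\ast\widehat{g}$ and hence $\sum_{m}|\widehat{g}(m)|\leqslant c\int_{\mathbb{R}^{d}}|\widehat{g}(\xi)|\,d\xi$, which applied to $g=\varphi_{\varepsilon}\ast f$ together with Cauchy--Schwarz in the integral yields a bound $c\,\varepsilon^{-d/2}\|\varphi\|_{2}\|f\|_{2}$. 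You instead apply Parseval on the torus to the periodizations of the compactly supported $L^{2}$ functions $f$ and $\varphi_{\varepsilon}$ separately, obtaining $\sum_{m}|\widehat{f}(m)|^{2}<+\infty$ and $\sum_{m}|\widehat{\varphi}(\varepsilon m)|^{2}<+\infty$, and then use Cauchy--Schwarz on the sums. Your device is more elementary (no smooth cutoff, no rapid decay of $\widehat{\psi}$ needed) and entirely adequate here, since both factors are compactly supported; the paper's inequality is more flexible in that it controls $\sum_{m}|\widehat{g}(m)|$ by $\|\widehat{g}\|_{L^{1}}$ for a single compactly supported function and generalizes to the Nikol'skii-type estimates it cites. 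Both give exactly the $\ell^{1}$ control needed, and the rest of your proof, including the uniform-in-$\varepsilon$ support bound that makes the final sum finite, is the same as the paper's.
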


Observe that if $\varphi\left(  x\right)  $ is smooth, then $\widehat{\varphi
}\left(  \xi\right)  $ has fast decay at infinity and the theorem reduces to
the classical Poisson summation formula. See e.g. \cite[Ch. 7, Cor. 2.6]{SW}
and \cite[ Ch. 2, Th. 3.16]{SW} for similar results where $\varphi\left(
x\right)  $ is the Poisson kernel.

\begin{proof}
Under the assumption of the theorem the convolution $\varphi_{\varepsilon}\ast
f\left(  x\right)  $ is continuous with compact support. Then $\sum
_{n\in\mathbb{Z}^{d}}\varphi_{\varepsilon}\ast f\left(  n+x\right)  $ is a
finite sum and gives a continuous function on the torus $\mathbb{R}%
^{d}/\mathbb{Z}^{d}=\left[  0,1\right)  ^{d}$, with Fourier coefficients%
\begin{align*}
&  \int_{\mathbb{R}^{d}/\mathbb{Z}^{d}}\left(  \sum_{n\in\mathbb{Z}^{d}%
}\varphi_{\varepsilon}\ast f\left(  n+x\right)  \right)  e^{-2\pi im\cdot
x}dx\\
&  =\int_{%
{\textstyle\bigcup\nolimits_{n\in\mathbb{Z}^{d}}}
\left\{  n+\left[  0,1\right)  ^{d}\right\}  }\varphi_{\varepsilon}\ast
f\left(  y\right)  e^{-2\pi im\cdot\left(  y-n\right)  }dy\\
&  =\int_{\mathbb{R}^{d}}\varphi_{\varepsilon}\ast f\left(  y\right)  e^{-2\pi
im\cdot y}dy=\widehat{\varphi}\left(  \varepsilon m\right)  \widehat{f}\left(
m\right)  .
\end{align*}
Hence $\sum_{n\in\mathbb{Z}^{d}}\varphi_{\varepsilon}\ast f\left(  n+x\right)
$ has Fourier expansion $\sum_{m\in\mathbb{Z}^{d}}\widehat{\varphi}\left(
\varepsilon m\right)  \widehat{f}\left(  m\right)  e^{2\pi im\cdot x}$. Under
the assumption of the theorem we have, for every $x$,
\[
\lim_{\varepsilon\rightarrow0^{+}}\left\{  \sum_{n\in\mathbb{Z}^{d}}%
\varphi_{\varepsilon}\ast f\left(  n+x\right)  \right\}  =\sum_{n\in
\mathbb{Z}^{d}}f\left(  n+x\right)
\]
Then it is enough to show that for every $x$,%
\begin{equation}
\sum_{n\in\mathbb{Z}^{d}}\varphi_{\varepsilon}\ast f\left(  n+x\right)
=\sum_{m\in\mathbb{Z}^{d}}\widehat{\varphi}\left(  \varepsilon m\right)
\widehat{f}\left(  m\right)  e^{2\pi im\cdot x}. \label{Identity}%
\end{equation}
This follows from the fact that $\sum_{m\in\mathbb{Z}^{d}}\left\vert
\widehat{\varphi}\left(  \varepsilon m\right)  \widehat{f}\left(  m\right)
\right\vert $ converges, which is a consequence of the following
Plancherel-Polya type inequality. Let $g\left(  x\right)  $ be an integrable
function with compact support and let $\psi\left(  x\right)  $ be a smooth
compactly supported function with $\psi\left(  x\right)  =1$ on the support of
$g\left(  x\right)  $. Since $g\left(  x\right)  =\psi\left(  x\right)
g\left(  x\right)  $, $\widehat{g}\left(  \xi\right)  =\widehat{\psi}%
\ast\widehat{g}\left(  \xi\right)  $, and $\widehat{\psi}\left(  \xi\right)  $
is rapidly decreasing, we have%
\begin{align*}
\sum_{m\in\mathbb{Z}^{d}}\left\vert \widehat{g}\left(  m\right)  \right\vert
&  =\sum_{m\in\mathbb{Z}^{d}}\left\vert \int_{\mathbb{R}^{d}}\widehat{\psi
}\left(  m-\xi\right)  \widehat{g}\left(  \xi\right)  \right\vert d\xi\\
&  \leqslant\int_{\mathbb{R}^{d}}\left\{  \sum_{m\in\mathbb{Z}^{d}}\left\vert
\widehat{\psi}\left(  m-\xi\right)  \right\vert \right\}  \left\vert
\widehat{g}\left(  \xi\right)  \right\vert d\xi\\
&  \leqslant\sup_{\xi\in\mathbb{R}^{d}}\left\{  \sum_{m\in\mathbb{Z}^{d}%
}\left\vert \widehat{\psi}\left(  m-\xi\right)  \right\vert \right\}
\,\int_{\mathbb{R}^{d}}\left\vert \widehat{g}\left(  \xi\right)  \right\vert
d\xi\,\\
&  =c\int_{\mathbb{R}^{d}}\left\vert \widehat{g}\left(  \xi\right)
\right\vert d\xi.
\end{align*}
The above the constant $c$ depends on $\psi\left(  x\right)  $, hence on the
support of $g\left(  x\right)  $. See e.g. \cite[Chapter 3]{Nik} for more
general inequalities of this type.

\noindent Applying this inequality to the function $g\left(  x\right)
=\varphi_{\varepsilon}\ast f\left(  x\right)  $ we obtain%
\begin{align*}
\sum_{m\in\mathbb{Z}^{d}}\left\vert \widehat{\varphi_{\varepsilon}\ast
f}\left(  m\right)  \right\vert  &  =\sum_{m\in\mathbb{Z}^{d}}\left\vert
\widehat{\varphi}\left(  \varepsilon m\right)  \widehat{f}\left(  m\right)
\right\vert \leqslant c\int_{\mathbb{R}^{d}}\left\vert \widehat{\varphi
}\left(  \varepsilon\xi\right)  \widehat{f}\left(  \xi\right)  \right\vert
d\xi\\
&  \leqslant c\left(  \int_{\mathbb{R}^{d}}\left\vert \widehat{\varphi}\left(
\varepsilon\xi\right)  \right\vert ^{2}d\xi\right)  ^{1/2}\left(
\int_{\mathbb{R}^{d}}\left\vert \widehat{f}\left(  \xi\right)  \right\vert
^{2}d\xi\right)  ^{1/2}\\
&  =c\varepsilon^{-d/2}\left(  \int_{\mathbb{R}^{d}}\left\vert \varphi\left(
x\right)  \right\vert ^{2}dx\right)  ^{1/2}\left(  \int_{\mathbb{R}^{d}%
}\left\vert f\left(  x\right)  \right\vert ^{2}dx\right)  ^{1/2}.
\end{align*}
Observe that the factor $\varepsilon^{-d/2}$ does not contradict the existence
of the limit as $\varepsilon\rightarrow0^{+}$. The above estimate is just what
we need to show the pointwise equality (\ref{Identity}) for every fixed
$\varepsilon>0$, since we already observed that the limit of the LHS of
(\ref{Identity}) exists.
\end{proof}

Our proof of Pick's theorem below is a corollary of the above version of the
Poisson summation formula, applied to characteristic functions of integer
polygons. Such characteristic functions do not satisfy the assumptions
(\ref{Assumption}) of Theorem \ref{Thm Poisson}, but they can be regularized
by modifying the values at the boundary. It is a classical argument to restate
Pick's theorem in terms of normalized angles as follows. Define a
regularization of the characteristic function of the polygon $P$:
\[
\widetilde{\chi}_{P}\left(  x\right)  =\left\{
\begin{array}
[c]{lll}%
0 &  & \text{if }x\notin P\text{,}\\
1 &  & \text{if }x\ \text{is in the interior of }P\text{,}\\
1/2 &  & \text{if }x\ \text{is in the interior of a side of }P\text{,}\\
\alpha/2\pi &  & \text{if }x\ \text{is a vertex of }P\text{, with interior
angle }\alpha\text{.}%
\end{array}
\right.
\]
Assuming that $P\ $has $N$ vertices, since the sum of the inner angles is
$\pi\left(  N-2\right)  $, we have%
\begin{align*}
\sum_{k\in\mathbb{Z}^{2}}\widetilde{\chi}_{P}\left(  k\right)   &
=\sum_{\text{interior\ points\ of}\ P}1+\sum_{\text{interior\ points\ of
sides\ of}\ P}1/2+\sum_{\text{vertices\ of}\ P}\alpha/2\pi\\
&  =I+\frac{1}{2}(B-N)+\frac{1}{2}(N-2)=I+\frac{1}{2}B-1\ .
\end{align*}

\noindent Hence Pick's theorem is reduced to the following

\begin{theorem}
\label{solid.angle.pick} If $P$ is an integer polygon, then%
\[
\sum_{n\in\mathbb{Z}^{2}}\widetilde{\chi}_{P}\left(  n\right)  =\left\vert
P\right\vert \ .
\]

\end{theorem}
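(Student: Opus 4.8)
The plan is to apply Theorem \ref{Thm Poisson} in dimension $d=2$ to $f=\widetilde{\chi}_{P}$, evaluated at the point $x=0$, with $\varphi$ chosen to be a \emph{radial} mollifier; the point will be that, although $\sum_{m}|\widehat{\chi}_{P}(m)|$ diverges, the ``tail'' of the resulting Poisson series vanishes term by term because the vertices of $P$ are integer points. So first I would fix $\varphi$ to be a smooth, radially symmetric, compactly supported function with $\int_{\mathbb{R}^{2}}\varphi=1$; then $\widehat{\varphi}$ is rapidly decreasing, radial, even, and $\widehat{\varphi}(0)=1$. Since $\widetilde{\chi}_{P}$ is bounded with support in the compact set $\overline{P}$, it is square integrable with compact support, so the only hypothesis of Theorem \ref{Thm Poisson} needing a check is the regularity condition (\ref{Assumption}), i.e. $\widetilde{\chi}_{P}(x)=\lim_{\varepsilon\rightarrow0^{+}}\varphi_{\varepsilon}\ast\widetilde{\chi}_{P}(x)$ for every $x$, which (as $\widetilde{\chi}_{P}=\chi_{P}$ off a null set) is the same as $\widetilde{\chi}_{P}(x)=\lim_{\varepsilon\rightarrow0^{+}}\varphi_{\varepsilon}\ast\chi_{P}(x)$.

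To verify this last identity: for $x$ interior to $P$ or exterior to $\overline{P}$ it is immediate, $\chi_{P}$ being locally constant. For $x$ in the interior of a side of $P$ there is a disk on which $P$ coincides with a half-plane $H$ whose boundary line passes through $x$, so for small $\varepsilon$ one has $\varphi_{\varepsilon}\ast\chi_{P}(x)=\varphi_{\varepsilon}\ast\chi_{H}(x)$, and passing to polar coordinates the rotational symmetry of $\varphi$ gives $\varphi_{\varepsilon}\ast\chi_{H}(x)=1/2=\widetilde{\chi}_{P}(x)$. For $x$ a vertex with interior angle $\alpha$ the same argument, with a wedge of opening $\alpha$ in place of $H$, gives $\varphi_{\varepsilon}\ast\chi_{P}(x)=\alpha/2\pi=\widetilde{\chi}_{P}(x)$ for small $\varepsilon$ (and this works verbatim at reflex vertices, where $\alpha>\pi$). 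Hence Theorem \ref{Thm Poisson} applies; evaluating (\ref{Poisson}) at $x=0$ and using $\widehat{\widetilde{\chi}_{P}}=\widehat{\chi}_{P}$ yields
\[
\sum_{n\in\mathbb{Z}^{2}}\widetilde{\chi}_{P}(n)=\lim_{\varepsilon\rightarrow0^{+}}\sum_{m\in\mathbb{Z}^{2}}\widehat{\varphi}(\varepsilon m)\,\widehat{\chi}_{P}(m).
\]

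Next I would analyse the right-hand side. The term $m=0$ is $\widehat{\varphi}(0)\widehat{\chi}_{P}(0)=|P|$, and the remaining sum is $0$ for \emph{every} $\varepsilon>0$, as follows. By the divergence theorem, for $m\neq0$,
\[
\widehat{\chi}_{P}(m)=\frac{-1}{2\pi i\,|m|^{2}}\sum_{S}(m\cdot n_{S})\int_{S}e^{-2\pi i m\cdot x}\,ds,
\]
the sum running over the sides $S$ of $P$ with outer unit normals $n_{S}$. Parametrizing a side from a vertex $v_{1}$ to a vertex $v_{2}$ linearly, $\int_{S}e^{-2\pi i m\cdot x}\,ds=|S|\,e^{-2\pi i m\cdot v_{1}}\int_{0}^{1}e^{-2\pi i t\,m\cdot(v_{2}-v_{1})}\,dt$; since $m\cdot v_{1}$ and $m\cdot(v_{2}-v_{1})$ are integers, the prefactor is $1$ and the integral equals $1$ if $m\perp(v_{2}-v_{1})$ and $0$ otherwise. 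In particular each $\int_{S}e^{-2\pi i m\cdot x}\,ds$ is a real number, so $\widehat{\chi}_{P}(m)$ is purely imaginary for all $m\in\mathbb{Z}^{2}\setminus\{0\}$, i.e. $\widehat{\chi}_{P}(-m)=\overline{\widehat{\chi}_{P}(m)}=-\widehat{\chi}_{P}(m)$. For each fixed $\varepsilon>0$, Theorem \ref{Thm Poisson} guarantees $\sum_{m}|\widehat{\varphi}(\varepsilon m)\widehat{\chi}_{P}(m)|<\infty$, so the sum over $m\neq0$ may be reorganized by pairing $m$ with $-m$; since $\widehat{\varphi}$ is even and $\widehat{\chi}_{P}$ is odd on $\mathbb{Z}^{2}\setminus\{0\}$, each pair contributes $0$. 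Letting $\varepsilon\rightarrow0^{+}$ gives $\sum_{n\in\mathbb{Z}^{2}}\widetilde{\chi}_{P}(n)=|P|$.

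The step that carries the geometric content, and the one I expect to be the main obstacle to write cleanly, is the verification of (\ref{Assumption}) at the boundary points: one must note that in a neighbourhood of an edge point (resp. a vertex) the polygon genuinely \emph{is} a half-plane (resp. a wedge), and that a rotation-invariant mollifier then reproduces \emph{exactly} the normalized-angle values built into $\widetilde{\chi}_{P}$. Everything on the Fourier side is a short computation, whose only arithmetic input is that $m\cdot v\in\mathbb{Z}$ whenever $m\in\mathbb{Z}^{2}$ and $v$ is a vertex of $P$ — which is precisely where the hypothesis that $P$ is an \emph{integer} polygon enters.
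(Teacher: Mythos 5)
Your proposal is correct and follows essentially the same route as the paper: verify the mollification hypothesis (\ref{Assumption}) via the local half-plane/wedge structure of $\partial P$ and a radial $\varphi$, apply Theorem \ref{Thm Poisson}, compute $\widehat{\chi}_{P}(m)$ by the divergence theorem using the integrality of the vertices, and kill the nonzero frequencies by pairing $m$ with $-m$. The only (harmless) cosmetic differences are your choice of a smooth rather than indicator-type mollifier and your phrasing of the cancellation via ``$\widehat{\chi}_{P}$ is purely imaginary, hence odd'' instead of the paper's ``$\widehat{\varphi}(\varepsilon m)$ is even while $m\cdot n_{j}/|m|^{2}$ is odd.''
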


\begin{proof}
We take $\varphi\left(  x\right)  $ radial with compact support and integral
$1$, for example we can take $\varphi\left(  x\right)  =4\pi^{-1}%
\chi_{\left\{  \left\vert x\right\vert <1/2\right\}  }\left(  x\right)  $. For
$\varepsilon>0$ small enough and every $n\in\mathbb{Z}^{2}$ it can be easily
shown that%
\[
\varphi_{\varepsilon}\ast\widetilde{\chi}_{P}\left(  n\right)  =\widetilde
{\chi}_{P}\left(  n\right)  .
\]
Then $\widetilde{\chi}_{P}\left(  x\right)  $ satisfies the assumption
(\ref{Assumption}) in Theorem \ref{Thm Poisson}. Then%
\begin{equation}
\sum_{n\in\mathbb{Z}^{2}}\widetilde{\chi}_{P}\left(  n\right)  =\lim
_{\varepsilon\rightarrow0^{+}}\left\{  \sum_{m\in\mathbb{Z}^{2}}%
\widehat{\varphi}\left(  \varepsilon m\right)  \widehat{\chi}_{P}\left(
m\right)  \right\}  \ . \label{FrequencySum}%
\end{equation}
Observe that in the above identity the limit can be omitted if $\varepsilon$
is small enough. Let $P$ have vertices $\left\{  P_{j}\right\}  \ $and sides
$\left\{  P_{j}+t\left(  P_{j+1}-P_{j}\right)  :t\in\left[  0,1\right]
\right\}  $ with outward unit normals $\left\{  n_{j}\right\}  $. Then the
divergence theorem yields
\begin{align*}
\widehat{\chi}_{P}\left(  m\right)   &  =\int_{P}e^{-2\pi im\cdot x}%
dx=\int_{P}\mathrm{\operatorname{div}}\left(  \frac{-m}{2\pi i\left\vert
m\right\vert ^{2}}e^{-2\pi im\cdot x}\right)  dx\\
&  =\frac{-1}{2\pi i}\sum_{j=1}^{N}\frac{m\cdot n_{j}}{\left\vert m\right\vert
^{2}}\left\vert P_{j+1}-P_{j}\right\vert \int_{0}^{1}e^{-2\pi im\cdot\left(
P_{j}+t\left(  P_{j+1}-P_{j}\right)  \right)  }dt\\
&  =\frac{-1}{2\pi i}\sum_{j=1}^{N}\frac{m\cdot n_{j}}{\left\vert m\right\vert
^{2}}\left\vert P_{j+1}-P_{j}\right\vert \ e^{-\pi im\cdot\left(
P_{j+1}+P_{j}\right)  }\frac{\sin\left(  \pi m\cdot\left(  P_{j+1}%
-P_{j}\right)  \right)  }{\pi m\cdot\left(  P_{j+1}-P_{j}\right)  }%
\end{align*}
(with $P_{N+1}=P_{1}$ and obvious modifications when $m\cdot\left(
P_{j+1}-P_{j}\right)  =0$). When $P_{j}$ and $m$ belong to $\mathbb{Z}^{2}$,
then%
\begin{align*}
&  e^{-\pi im\cdot\left(  P_{j+1}+P_{j}\right)  }\frac{\sin\left(  \pi
m\cdot\left(  P_{j+1}-P_{j}\right)  \right)  }{\pi m\cdot\left(  P_{j+1}%
-P_{j}\right)  }\\
&  =e^{-2\pi im\cdot P_{j}}e^{-\pi im\cdot\left(  P_{j+1}-P_{j}\right)  }%
\frac{\sin\left(  \pi m\cdot\left(  P_{j+1}-P_{j}\right)  \right)  }{\pi
m\cdot\left(  P_{j+1}-P_{j}\right)  }\\
&  =\left\{
\begin{array}
[c]{cc}%
0 & \text{if\ \ }m\cdot\left(  P_{j+1}-P_{j}\right)  \neq0,\\
1 & \text{if\ \ }m\cdot\left(  P_{j+1}-P_{j}\right)  =0.
\end{array}
\right.
\end{align*}
Recalling that $\widehat{\varphi}\left(  0\right)  =1$ and $\widehat{\chi}%
_{P}\left(  0\right)  =\left\vert P\right\vert $, and that $\widehat{\varphi
}\left(  m/R\right)  $ is radial, hence even, while $m\cdot n_{j}$ is odd, we
obtain
\begin{align}
&  \sum_{m\in\mathbb{Z}^{2}}\widehat{\varphi}\left(  \varepsilon m\right)
\widehat{\chi}_{P}\left(  m\right)  =\widehat{\chi}_{P}\left(  0\right)
+\sum_{m\in\mathbb{Z}^{2}\setminus\left\{  0\right\}  }\widehat{\varphi
}\left(  \varepsilon m\right)  \widehat{\chi}_{P}\left(  m\right)
\label{magic}\\
&  =\left\vert P\right\vert -\frac{1}{2\pi i}\sum_{j=1}^{N}\left\vert
P_{j+1}+P_{j}\right\vert \left(  \sum_{m\neq0,\ m\cdot\left(  P_{j+1}%
-P_{j}\right)  =0}\widehat{\varphi}\left(  m/R\right)  \frac{m\cdot n_{j}%
}{\left\vert m\right\vert ^{2}}\right)  =\left\vert P\right\vert \ .\nonumber
\end{align}

\end{proof}

\bigskip

Let us conclude with some remarks and a conjecture.

Pick's theorem, in the naive form that we know it, fails in dimension $d\geq
3$. Indeed, as observed by J.E. Reeve, the tetrahedron with vertices $\left(
0,0,0\right)  $, $\left(  1,0,0\right)  $, $\left(  0,1,0\right)  $, $\left(
1,1,N\right)  $, has volume $N/6$, contains four integer points on the
boundary, and has no integer points inside. Hence there is no simple relation
between the volume and the integer points for general 3-dimensional integer polytopes.

Fascinating relations do appear, however, when an integer polyhedron is
dilated by an integer factor. By Ehrhart's theorem from the 1950's, the number
of integer points in a dilated integer polyhedron $P$ is a polynomial function
of the integer dilation parameter, with leading coefficient equal to the
volume of $P$. The reader may consult, for example, the books \cite{Barvinok}
and \cite{BR}.

The above defined \textit{regularized discrete volume} $\sum_{n\in
\mathbb{Z}^{d}}\widetilde{\chi}_{P}\left(  n\right)  $ can be easily defined
in every dimension, but in general it is no longer equal to the Euclidean
volume $\left\vert P\right\vert $. However, as we see from equations
\ref{FrequencySum} and \ref{magic}, it is still true that
\[
\sum_{n\in\mathbb{Z}^{d}}\widetilde{\chi}_{P}\left(  n\right)  =\left\vert
P\right\vert \text{\ \ \ \ \ if and only if\ \ \ \ \ }\sum_{0\neq
m\in\mathbb{Z}^{d}}\widehat{\varphi}\left(  \varepsilon m\right)
\widehat{\chi}_{P}\left(  m\right)  =0
\]
for all sufficiently small $\varepsilon>0$, and for every choice of
$\varphi\left(  x\right)  $ as before.

If an integer polytope $P$ satisfies%
\begin{equation}
\sum_{n\in\mathbb{Z}^{d}}\widetilde{\chi}_{P}\left(  n\right)  =\left\vert
P\right\vert , \label{Higher1}%
\end{equation}
that is if its continuous Euclidean volume is equal to its regularized
discrete volume, then we call such an integer polytope a \textit{concrete
polytope}. Here we follow the tradition of \cite{GrahamKnuthPatachnik} in
using the first three letters of `continuous' and the last 5 letters of
`discrete' to consider objects that can be described by both continuous
methods and by discrete methods.

An interesting open problem is to characterize the concrete integer polytopes
in $\mathbb{R}^{d}$; that is, what are the integer polytopes which enjoy the
relation $\sum_{n\in\mathbb{Z}^{d}}\widetilde{\chi}_{P}\left(  n\right)
=\left\vert P\right\vert \,$? In other words, this class of integer polytopes
gives a natural extension to higher dimensions for the Pick-type property
(Theorem \ref{solid.angle.pick}) that we saw in dimension $2$.

As already shown by Barvinok \cite{Barvinok}, integer zonotopes (polytopes
whose faces, of all dimensions, are symmetric) are concrete polytopes. A more
general family of concrete polytopes is given by polytopes that multi-tile
Euclidean space. Given an integer $k$, a polytope $P\subset\mathbb{R}^{d}$
multi-tiles (or $k$-tiles) $\mathbb{R}^{d}$ with a discrete set of translation
vectors $\mathcal{L}$ if each point $x\in\mathbb{R}^{d}$ is covered $k$-times
(except for $\partial P$ and its translates) by the translations of $P$, from
the set of translation vectors $\mathcal{L}$.

Indeed, a periodization argument, used extensively by Kolountzakis (see
\cite{GRS}, \cite[p.137]{Kol} ), tells us that the integer polytope $P$
multi-tiles $\mathbb{R}^{d}$ with the lattice $\mathbb{Z}^{d}$ of integer
translations, if and only if $\widehat{\chi}_{P}\left(  m\right)  =0$ for
every $m\in\mathbb{Z}^{d}\mathbb{\setminus}\{0\}$. So we see that identity
(\ref{Higher1}) is trivially satisfied in this case, and therefore the
$k$-tiling integer polytopes are concrete polytopes.

The following result \cite{GRS} gives a characterization of the integer
polytopes that multi-tile under translations.

\begin{theorem}
\label{multi-tiling}Let $P$ be an integer polytope in $\mathbb{R}^{d}$. Then
$P$ multi-tiles $\mathbb{R}^{d}$ if and only if $P$ is a symmetric polytope,
and all of its facets (codimension-1 faces) are symmetric polytopes.
\end{theorem}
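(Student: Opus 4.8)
The plan is to build on the equivalence recalled just above Theorem~\ref{multi-tiling}: the integer polytope $P$ multi-tiles $\mathbb{R}^{d}$ with the lattice $\mathbb{Z}^{d}$ if and only if $\widehat{\chi}_{P}(m)=0$ for every $m\in\mathbb{Z}^{d}\setminus\{0\}$. So it is enough to show that this vanishing is equivalent to $P$ and all of its facets being centrally symmetric. The ``if'' part I would prove directly, by a short computation patterned on the proof of Theorem~\ref{solid.angle.pick}; the ``only if'' part is the hard one, and there I would follow the argument of \cite{GRS}.

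For the ``if'' part, assume $P$ is centrally symmetric about a point $c$ and every facet $F_{j}$ is centrally symmetric about its centre $c_{j}$. Since $P$ has integer vertices, pairing each vertex with its mirror image under $x\mapsto 2c-x$ gives $2c\in\mathbb{Z}^{d}$; similarly, each facet being an integer polytope symmetric about $c_{j}$ gives $2c_{j}\in\mathbb{Z}^{d}$, hence $c_{j}-c\in\tfrac12\mathbb{Z}^{d}$. As in the proof of Theorem~\ref{solid.angle.pick}, the divergence theorem gives, for $m\neq 0$,
\[
\widehat{\chi}_{P}(m)=\frac{-1}{2\pi i\left\vert m\right\vert ^{2}}\sum_{j}\left(m\cdot\nu_{j}\right)\int_{F_{j}}e^{-2\pi i m\cdot x}\,d\sigma(x),
\]
where $\nu_{j}$ is the outward unit normal to $F_{j}$ and $d\sigma$ is surface measure. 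Since $P=2c-P$, the facets split into pairs $\{F_{j},F_{j'}\}$ with $F_{j'}=2c-F_{j}$ and $\nu_{j'}=-\nu_{j}$, and no facet is self-paired (otherwise its centre would be the interior point $c$). Substituting $F_{j'}=2c-F_{j}$ in the corresponding surface integral and using that $F_{j}$ is symmetric about $c_{j}$, I would check that the contribution of each pair collapses to a (real) multiple of $\sin\!\bigl(2\pi m\cdot(c_{j}-c)\bigr)$, which vanishes because $m\cdot(c_{j}-c)\in\tfrac12\mathbb{Z}$. Hence $\widehat{\chi}_{P}(m)=0$ for all $m\neq 0$, so $\sum_{n\in\mathbb{Z}^{d}}\chi_{P}(x+n)$ equals the constant $\left\vert P\right\vert$ almost everywhere; since the left side is integer valued this forces $\left\vert P\right\vert\in\mathbb{Z}$, i.e.\ $P$ multi-tiles $\mathbb{R}^{d}$ with multiplicity $\left\vert P\right\vert$.

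For the ``only if'' part I would restrict $\widehat{\chi}_{P}$ to rational subspaces. Given a primitive $w\in\mathbb{Z}^{d}$, slicing $P$ by the hyperplanes $\{u\cdot x=t\}$ with $u=w/\left\vert w\right\vert$ shows that $\widehat{\chi}_{P}(\ell w)=\widehat{A_{u}}(\ell\left\vert w\right\vert)$, a one-dimensional Fourier transform of the cross-section function $A_{u}(t)=\mathrm{vol}_{d-1}\bigl(P\cap\{u\cdot x=t\}\bigr)$; the hypothesis then says that the $\left\vert w\right\vert^{-1}$-periodization of $A_{u}$ is the constant $\left\vert w\right\vert\left\vert P\right\vert$. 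The key point is that $A_{u}$ is piecewise polynomial of degree $\le d-1$ with breakpoints only at the numbers $(w\cdot v)/\left\vert w\right\vert$, $v$ a vertex of $P$ --- and these are integer multiples of the period $\left\vert w\right\vert^{-1}$, because $P$ is an integer polytope. So under the periodization the breakpoints all superpose, the periodized function is a single polynomial, and its constancy forces an exact polynomial identity between the pieces of $A_{u}$. Imposing these identities over all primitive $w$, together with the analogous information obtained by restricting $\widehat{\chi}_{P}$ to the rational hyperplanes $w^{\perp}$ --- where it is the $(d-1)$-dimensional Fourier transform of the fibre-length function $y\mapsto \mathrm{length}\bigl(P\cap(y+\mathbb{R}w)\bigr)$ --- one first derives that $P$ is centrally symmetric, and then that the datum attached to each facet inherits the analogous vanishing condition inside its own hyperplane. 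An induction on the dimension (the case $d=1$ being immediate, since a segment of integer length multi-tiles $\mathbb{Z}$) then yields that every facet is centrally symmetric. The hard part will be precisely this passage from the polynomial and Fourier identities produced by all the rational directions to the geometric statements that $P$, and then each of its facets, is centrally symmetric; for the complete argument I would refer to \cite{GRS}.
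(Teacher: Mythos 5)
The paper itself offers no proof of Theorem~\ref{multi-tiling}: it is imported verbatim from \cite{GRS}, so there is no internal argument to compare yours against. Judged on its own terms, the sufficiency half of your proposal is correct and complete, and it is nicely consonant with the paper's methods: the divergence-theorem expression for $\widehat{\chi}_{P}(m)$ as a sum of facet integrals, the pairing $F_{j'}=2c-F_{j}$ with $\nu_{j'}=-\nu_{j}$, and the observation that $2c,2c_{j}\in\mathbb{Z}^{d}$ (since the central symmetries of $P$ and of each facet pair up integer vertices without fixed points) do force each paired contribution to be a real multiple of $\sin\bigl(2\pi m\cdot(c_{j}-c)\bigr)=0$. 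This yields $\widehat{\chi}_{P}(m)=0$ for all $m\neq 0$ and hence multi-tiling with the lattice $\mathbb{Z}^{d}$, which is in fact stronger than the stated conclusion.

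The necessity half, however, contains a genuine gap beyond the explicit deferral to \cite{GRS}. The theorem's hypothesis is that $P$ multi-tiles $\mathbb{R}^{d}$ with \emph{some} discrete set of translation vectors $\mathcal{L}$, whereas the Fourier criterion you invoke --- $\widehat{\chi}_{P}(m)=0$ for all $m\in\mathbb{Z}^{d}\setminus\{0\}$ --- is equivalent only to multi-tiling with the specific lattice $\mathbb{Z}^{d}$. Your slicing argument therefore starts from a hypothesis you are not entitled to: if $\mathcal{L}\neq\mathbb{Z}^{d}$ (and $\mathcal{L}$ need not even be a lattice), the periodizations of the cross-section functions $A_{u}$ over $\mathbb{Z}^{d}$ need not be constant, and the whole chain of polynomial identities evaporates. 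Moreover, even granting $\mathcal{L}=\mathbb{Z}^{d}$, the step ``one first derives that $P$ is centrally symmetric, and then that each facet inherits the analogous vanishing condition'' is not a reduction but a restatement of the entire difficulty; the argument of \cite{GRS} for necessity is a geometric/combinatorial one (in the tradition of Minkowski and McMullen) rather than the Fourier-slicing route you sketch, and it is not clear that your route closes. As written, the proposal proves one implication and cites the other.
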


We therefore see that any integer polytope $P$ which is symmetric, and has
symmetric facets, is a concrete polytope. Are there other, more general
classes of concrete polytopes? Yes! In dimension two any integer triangle is
concrete. Even in all higher dimensions the answer is affirmative. For
example, consider the tetrahedron $\Delta$ defined as the convex hull of the
vectors $(0,0,0),(1,0,0),(1,1,0)$, and $(1,1,1)$. With six reflections of
$\Delta$ about its facets, we can reconstruct the unit cube. Since these
reflections are isometries and preserve $\mathbb{Z}^{d}$, both the discrete
and continuous volumes of $\Delta$ are equal to $1/6$. Hence $\Delta$ is
concrete, but lies outside the class of multi-tiling polytopes.

We invite the reader to reflect upon the following conjectured extension of
Pick's theorem to higher dimensional objects.

\begin{conjecture}
\label{conjecture} Suppose that an integer polytope $P$ is a concrete
polytope. Then $P$ multi-tiles $\mathbb{R}^{d}$ by translations together with
a finite set of reflections.
\end{conjecture}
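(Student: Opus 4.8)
The plan is to attack the problem through the Fourier-analytic criterion already isolated in the paper: $P$ is concrete if and only if $\widehat{\chi}_P(m)=0$ for every $m\in\mathbb Z^d\setminus\{0\}$ for which $\widehat\varphi(\varepsilon m)\neq 0$ — but since $\varphi$ ranges over all radial bumps, concreteness forces a weighted vanishing of the boundary terms in (\ref{magic}) rather than the pointwise vanishing of $\widehat{\chi}_P$. So the first step is to write down, for a general integer polytope in $\mathbb R^d$, the analogue of the divergence-theorem expansion used in the proof of Theorem \ref{solid.angle.pick}: iterating the divergence theorem expresses $\widehat{\chi}_P(m)$ as a sum over flags of faces, and for $m\in\mathbb Z^d$ lying in the affine hull of a facet the oscillatory integral over that facet collapses exactly as the $\sin(\pi m\cdot(P_{j+1}-P_j))$ factor did in dimension two. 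The upshot should be that concreteness is equivalent to a system of linear relations among the (signed, normalized) volumes of the faces of $P$ that are "visible" from the integer directions — a purely combinatorial-geometric condition on $P$.

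Second, I would run the reflection construction in reverse. Given such a $P$, pick a facet $F$ and reflect $P$ across the hyperplane containing $F$; because the vertices of $P$ are integral and $F$ spans a sublattice of $\mathbb Z^d$, the reflected copy $P'$ again has the property that, restricted to the relevant lattice, its boundary Fourier coefficients satisfy the same vanishing relations. The claim to be proved is that one can choose a finite sequence of such facet-reflections so that the images of $P$ exactly $k$-cover some fundamental parallelepiped of $\mathbb Z^d$ (equivalently, $k$-tile $\mathbb R^d$), with $k = $ the common denominator of $|P|$. The model case is the tetrahedron $\Delta$ in the excerpt: six facet-reflections reassemble the cube. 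The induction should be on dimension, peeling off one facet at a time and using Theorem \ref{multi-tiling} as the base of the tower: once all the facets (and, recursively, all lower faces) of the reflected union are symmetric, the multi-tiling theorem applies.

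Third, and this is where the real work lies, one must show that concreteness actually delivers \emph{enough} symmetry after reflections. The natural intermediate statement is: an integer polytope is concrete if and only if it can be cut-and-reassembled, by a finite group generated by reflections in facet-hyperplanes, into a polytope that is centrally symmetric with centrally symmetric facets. The "if" direction is the easy half (isometries preserving $\mathbb Z^d$ preserve both volumes, and Theorem \ref{multi-tiling} handles the symmetric reassembly). The "only if" direction is the obstacle: I expect to need a rigidity argument showing that the linear relations among face-volumes coming from (\ref{magic}) over \emph{all} integer directions are rigid enough to pin down the combinatorial type up to reflection. A promising route is to exploit that $\widehat\varphi$ radial means (\ref{magic}) sees, for each rational direction, the \emph{sum} of contributions from all parallel facets; concreteness then forces these directional sums to cancel, which is precisely a "valuation-theoretic" shadow of the reflection symmetry. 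Making the leap from "all directional facet-sums vanish" to "an honest geometric reflection decomposition exists" is the crux, and I suspect it requires either an Ehrhart-reciprocity input (relating the discrete volume of $P$ to that of its dual/complement, cf. \cite{BR}) or a McMullen-style polytope-algebra argument to convert the Fourier identities into a statement about the group generated by the facet reflections.

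Honestly, I would not expect a complete proof to fall out of this plan easily — the conjecture is likely hard precisely because the passage from the analytic vanishing condition to a concrete geometric reflection scheme is not a formal consequence of anything in the excerpt, and low-dimensional evidence (triangles, $\Delta$) may be misleading. The plan above is the natural first assault; the step most in danger of failing is the third one, and a reasonable fallback would be to first settle the conjecture for \emph{simplices}, where the flag expansion of $\widehat{\chi}_P$ is most transparent and the reflection group is a genuine finite Coxeter-type group acting on a fundamental domain.
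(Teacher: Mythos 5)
The statement you are attempting is presented in the paper as an open conjecture: the authors give no proof of it, so there is no argument of theirs to compare yours against, and the relevant question is only whether your proposal closes the problem. It does not, and you say as much yourself. Everything rests on your third step --- the passage from the analytic vanishing condition extracted from (\ref{magic}) to the existence of an actual finite reflection scheme that multi-tiles --- and that step is left entirely open. The first two steps, even granted, only reformulate concreteness and describe what a solution would look like; they do not advance beyond what the paper already records (the equivalence of (\ref{Higher1}) with the vanishing of the weighted frequency sums, Theorem \ref{multi-tiling} as the symmetric base case, and the tetrahedron $\Delta$ as the motivating example). So this is a plan of attack, not a proof, and the crux is exactly where you locate it.

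Two of the steps you do sketch also have concrete problems. First, in your reflection step you assert that reflecting $P$ across the affine hull of a facet $F$ behaves well with respect to the lattice ``because the vertices of $P$ are integral and $F$ spans a sublattice of $\mathbb{Z}^d$.'' That is false in general: a reflection in a rational hyperplane maps $\mathbb{Z}^d$ into itself only under a restrictive arithmetic condition on the primitive normal vector (already in the plane, reflection across the line through $(0,0)$ and $(1,2)$ sends $(1,0)$ to $(-3/5,4/5)$). The facet hyperplanes of $\Delta$ are coordinate and diagonal hyperplanes, which is precisely why that example works and why it is misleading as a model for the general case; the paper's own verification that $\Delta$ is concrete leans on the fact that those particular reflections preserve $\mathbb{Z}^d$. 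Second, your claim that concreteness is ``equivalent to a system of linear relations among the signed volumes of faces'' needs real work in dimension $d\geq 3$: the condition is that $\sum_{m\neq 0}\widehat{\varphi}(\varepsilon m)\widehat{\chi}_P(m)=0$ for all small $\varepsilon$ and all admissible $\varphi$, and after the flag expansion the inner sums over integer points in a facet's normal-orthogonal lattice are genuinely infinite and only conditionally summable, so the clean two-dimensional collapse via the factor $\sin(\pi m\cdot(P_{j+1}-P_j))$ does not iterate for free. A sensible scaled-down goal, as you suggest, is the case of simplices; but as written the proposal contains no proof of the conjecture, and neither does the paper.
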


\end{document}